\title[Weyl submodules in restrictions of simple modules]
{Weyl submodules in restrictions of simple modules}
\author{Vladimir Shchigolev}
\address{
\noindent Department of Algebra, Faculty of Mathematics, Lomonosov
Moscow State University, Leninskiye Gory, Moscow, 119899, RUSSIA}
\email{shchigolev\_vladimir@yahoo.com}
\subjclass{20G05}
\renewcommand{\phi}{\varphi}
\renewcommand{\kappa}{\varkappa}
\renewcommand{\epsilon}{\varepsilon}
\newenvironment{explicit}[1]{\par\noindent{\bf#1.} \it}{\par}
\def\F{\mathbb F}
\def\Z{\mathbb Z}
\def\C{\mathbb C}
\def\Hom{\mathop{\rm Hom}}
\def\ch{\mathop{\rm char}}
\def\rad{\mathop{\rm rad}}
\def\SL{{\rm SL}}
\def\sl{{\mathfrak sl}}
\def\coeff{\mathop{\rm coeff}\nolimits}
\def\|{\mathbin|}
\def\diag{\mathop{\rm diag}\nolimits}
\newtheorem{theorem}{Theorem}
\newtheorem{proposition}[theorem]{Proposition}
\newtheorem{lemma}[theorem]{Lemma}
\newtheorem{corollary}[theorem]{Corollary}
\newtheorem{problem}[theorem]{Problem}
\newenvironment{remark}{\refstepcounter{theorem}\par{\noindent\bf Remark \arabic{theorem}.}}{}
\renewcommand{\(}{\left(}
\renewcommand{\)}{\right)}
\def\<{\langle}
\def\>{\rangle}
\renewcommand{\le}{\leqslant}
\renewcommand{\ge}{\geqslant}
\def\={\equiv}
\def\bbar#1{\bar{\bar #1}}
\def\U{\mathcal U}
\begin{document}

\maketitle

\begin{abstract} Let $\F$ be an algebraically closed field of
characteristic $p>0$. Suppose that $\SL_{n-1}(\F)$ is naturally
embedded into $\SL_n(\F)$ (either in the top left corner or
in the bottom right corner). We prove that certain Weyl modules
over $\SL_{n-1}(\F)$ can be embedded into
the restriction $L(\omega)\!\!\downarrow_{\SL_{n-1}(\F)}$,
where $L(\omega)$ is a simple $\SL_n(\F)$-module.
This allows us to construct new primitive vectors in
$L(\omega)\!\!\downarrow_{\SL_{n-1}(\F)}$ from any primitive
vectors in the corresponding Weyl modules.
Some examples are given to show that this result actually works.

\end{abstract}

\section{Introduction}

Let $G=\SL_n(\F)$, where $\F$ is an algebraically closed
field of characteristic $p>0$ and $n\ge3$.
Consider the subgroup $G^{(q)}$ of $G$
generated by the root elements $x_\alpha(t)$, $x_{-\alpha}(t)$,
where $\alpha$ is a simple root distinct from a fixed terminal (simple)
root $\alpha_q$. It is a classical problem to describe
the structure of the restriction $L{\downarrow}_{G^{(q)}}$,
where $L$ is a simple rational $G$-module.

In this paper, we focus on primitive (with respect to $G^{(q)}$)
vectors of $L{\downarrow}_{G^{(q)}}$.
The complete combinatorial description of these vectors is an open problem
(stated in~\cite{Kleshchev_bou}),
although lately there has been some progress in this direction
(\cite{Kleshchev2},~\cite{Kleshchev_gjs11},~\cite{Shchigolev_submitted_JA_2007}).

Another problem of equal importance is the description of primitive vectors
in Weyl modules. Known methods of constructing such vectors
(\cite{Carter_Lusztig}, \cite{Carter_Payne}) and methods of constructing
primitive vectors in restrictions $L{\downarrow}_{G^{(q)}}$
(\cite{Kleshchev2}, \cite{Kleshchev_gjs11}, \cite{Shchigolev15}, \cite{Shchigolev_submitted_JA_2007})
bear some similarity (e.g. similar lowering operators),
which is still not fully understood.

The present paper contains a combinatorial condition
under which all primitive vectors (regardless of their nature)
of certain Weyl modules over $G^{(q)}$ become primitive vectors
of $L{\downarrow}_{G^{(q)}}$.
This result is proved by embedding the corresponding Weyl
modules into $L{\downarrow}_{G^{(q)}}$ (Theorem~A).
Examples~I and II show that our result actually works,
that is, produces nonzero
primitive vectors of $L{\downarrow}_{G^{(q)}}$.

We also hope that Theorem~A, will be useful for finding
new composition factors of $L{\downarrow}_{G^{(q)}}$
and lower estimates of the dimensions of the weight spaces~of~$L$.

We order the simple roots $\alpha_1,\ldots,\alpha_{n-1}$ so that
$x_{\alpha_i}(t)=E+te_{i,i+1}$.
Then $x_{\alpha_i+\cdots+\alpha_{j-1}}(t)=E+te_{i,j}$ and
$x_{-\alpha_i-\cdots-\alpha_{j-1}}(t)=E+te_{j,i}$,
where $1\le i<j\le n$.
Here and in what follows $E$ is the identity $n\times n$ matrix
and $e_{i,j}$ is the $n\times n$ matrix having $1$ in the $ij$th
position and $0$ elsewhere. The root system $\Phi$ of $G$
consists of the roots $\pm(\alpha_i+\cdots+\alpha_{j-1})$
and the positive root system $\Phi^+$ consists of the roots
$\alpha_i+\cdots+\alpha_{j-1}$, where $1\le i<j\le n$.
Let $\omega_1,\ldots,\omega_{n-1}$ denote the fundamental weights
corresponding to the roots $\alpha_1,\ldots,\alpha_{n-1}$.

In $G$, we fix the maximal torus $T$ consisting of diagonal matrices
and the Borel subgroup $B$ consisting of upper triangular
matrices.

The {\it hyperalgebra} of $G$ is constructed as follows.
Consider the following elements of $\sl_n(\C)$:
$X_{\alpha_i+\cdots+\alpha_{j-1}}=e_{i,j}$,
$X_{-\alpha_i-\cdots-\alpha_{j-1}}=e_{j,i}$, where $1\le i<j\le n$,
and $H_{\alpha_i}=e_{i,i}-e_{i+1,i+1}$, where $1\le i<n$.
Following~\cite[Theorem~2]{Steinberg_eng}, we denote by $\U_\Z$
the subring of the universal enveloping algebra of $\sl_n(\C)$
generated by divided powers $X_\alpha^m/m!$, where $\alpha\in\Phi$ and
$m\in\Z^+$ (the set of nonnegative integers).
The hyperalgebra of $G$ is the tensor product $\U:=\U_\Z\otimes_\Z\F$.
Elements $X_{\alpha,m}:=(X_\alpha^m/m!)\otimes1_\F$
generate $\U$ as an $\F$-algebra.

Every rational $G$-module $V$ can be made into a $\U$-module by
the rule
\begin{equation}\label{equation:-1}
x_\alpha(t)v=\sum\nolimits_{m=0}^{+\infty}t^mX_{\alpha,m}v.
\end{equation}
We also need the
elements $H_{\alpha_i,m}=\binom{H_{\alpha_i}}m\otimes1_\F$. It is easy
to show that these elements actually belong to $\U$
(e.g.,~\cite[Corollary to Lemma~5]{Steinberg_eng}).
We shall often abbreviate $X_\alpha:=X_{\alpha,1}$
and $H_{\alpha_i}:=H_{\alpha_i,1}$ if this notation does
not cause confusion.

For any integers $q_1,\ldots,q_m\in\{1,\ldots,n-1\}$,
we denote by $G^{(q_1,\ldots,q_m)}$
the subgroup of $G$ generated by the root elements $x_{\alpha_i}(t)$, $x_{-\alpha_i}(t)$
with $i\in\{1,{\ldots},n-1\}\setminus\{q_1,{\dots},q_m\}$.
Note that $G^{(q_1,\ldots,q_m)}$ is the universal Chevalley group
with root system
$\Phi\cap\sum_{i\in\{1,\ldots,n-1\}\setminus\{q_1,\ldots,q_m\}}\Z\alpha_i$
(\cite[Theorem~27.3]{Humphreys_eng}).

In $G^{(q_1,\ldots,q_m)}$, we fix the maximal torus
$T^{(q_1,\ldots,q_m)}$ generated by the elements
$h_{\alpha_i}(t)=\diag(1,\ldots,1,t,t^{-1},1,\ldots,1)$,
where $t\in\F^*$ is at the $i$th position and $i\in\{1,{\ldots},n-1\}\setminus\{q_1,{\dots},q_m\}$,
and the Borel subgroup generated by $T^{(q_1,\ldots,q_m)}$ and the root
elements $x_{\alpha}(t)$ with
$\alpha\in\Phi^{(q_1,\ldots,q_m)}\cap\Phi^+$.

We denote by $X(T)$ the set of $T$-weights and by $X^+(T)$ the set of dominant $T$-weights.
For any $\omega\in X^+(T)$, we denote by $L(\omega)$ and $\Delta(\omega)$
the simple rational $G$-module with highest
weight $\omega$ and the Weyl $G$-module with highest weight $\omega$
respectively. We fix nonzero vectors $v^+_\omega$ and $e^+_\omega$ of
$L(\omega)$ and $\Delta(\omega)$ respectively having weight~$\omega$.
Similar notations will be used for subtori $T^{(q_1,\ldots,q_m)}$.
We shall often omit the prefix before the word ``weight''
if it is clear which torus we mean.

The terminal roots of $\Phi$ are $\alpha_1$ and $\alpha_{n-1}$.
Thus $q=1$ or $q=n-1$. For any weight $\kappa\in X(T)$, we denote
by $\bar\kappa$ and $\bbar\kappa$ the restrictions of $\kappa$ to
$T^{(1)}$ and $T^{(n-1)}$ respectively. The main results of the
present paper are as~follows.

\begin{explicit}{Theorem~A}
 Let $G=\SL_n(\F)$, $\omega\in X^+(T)$ and $k=0,\ldots,p-1$.
{\leftmargini=22pt
\begin{enumerate}
\itemsep=4pt
    \item\label{condition:G1} If $\<\omega,\alpha_1\>-l\not\equiv0\pmod p$ for any $l=0,\ldots,k-1$ and
          there is \linebreak$m=0,\ldots,k$ such that $\Delta(\bar\omega+m\bar\omega_2)$ is simple,
          then the $G^{(1)}$-submodule of $L(\omega)$ generated by $X_{-\alpha_1,k}v^+_\omega$
          is isomorphic to $\Delta(\bar\omega+k\bar\omega_2)$.
    \item\label{condition:Gn-1} If $\<\omega,\alpha_{n-1}\>-l\not\equiv0\pmod p$ for any $l=0,\ldots,k-1$ and
          there is \linebreak$m=0,\ldots,k$ such that $\Delta(\bbar\omega+m\bbar\omega_{n-2})$ is simple,
          then the $G^{(n-1)}$-sub\-module of $L(\omega)$ generated by $X_{-\alpha_{n-1},k}v^+_\omega$
          is isomorphic to $\Delta(\bbar\omega+k\bbar\omega_{n-2})$.
\end{enumerate}}
\end{explicit}

\begin{explicit}{Theorem~B}
\!\! \!\! \!\! \!\! \!\! \!\! Let $G{=}\SL_n(\F)$,
$\omega{\in}X^+(T)$, $k{=}0,{\ldots},p{-}1$ and $q{=}1$ or
$q{=}n{-}1$. The $G^{(q)}$-sub\-mo\-dule of $L(\omega)$ generated
by $X_{-\alpha_q,k}v^+_\omega$ is isomorphic to a Weyl module if
and only if $\<\omega,\alpha_q\>-l\not\equiv0\pmod p$ for any
$l=0,\ldots,k-1$ and { \leftmargini=22pt
\begin{enumerate}
\itemsep=4pt
    \item\label{condition:case:q=1} any nonzero primitive vector of $\Delta(\bar\omega+k\bar\omega_2)$ has weight
          $\bar\omega+k\bar\omega_2-b_2\bar\alpha_2-\cdots-b_{n-1}\bar\alpha_{n-1}$ with
          $k\ge b_2\ge\cdots\ge b_{n-1}\ge0$ in the case $q=1${\rm;}
    \item\label{condition:case:q=n-1} any nonzero primitive vector of $\Delta(\bbar\omega+k\bbar\omega_{n-2})$ has weight
           $\bbar\omega+k\bbar\omega_{n-2}-b_1\bbar\alpha_1-\cdots-b_{n-2}\bbar\alpha_{n-2}$
           with $0\le b_1\le\cdots\le b_{n-2}\le k$ in the case $q=n-1$.
\end{enumerate}}
\noindent More precisely, we have
$KG^{(1)}X_{-\alpha_1,k}v^+_\omega\cong\Delta(\bar\omega+k\bar\omega_2)$
in case~\ref{condition:case:q=1} and
$KG^{(n-1)}X_{-\alpha_{n-1},k}v^+_\omega\cong\Delta(\bbar\omega+k\bbar\omega_{n-2})$
in case~\ref{condition:case:q=n-1}.
\end{explicit}

Theorem~A can be viewed as a special case of the following more general problem
(valid for an arbitrary semisimple group $G$) stated by Irina Suprunenko:

\begin{problem}\label{problem:1}
Let $\alpha_q$ be a terminal root of the Dynkin diagram of $\Phi$
and $k=0,\ldots,p-1$. Describe the weights $\omega\in X^+(T)$
such that the $G^{(q)}$-submodule of the simple module $L(\omega)$
generated by $X_{-\alpha_q,k}v^+_\omega$ is isomorphic to a Weyl module.
\end{problem}

Theorem~B solves this problem for $G=\SL_n(\F)$ in terms of the Hom-spaces between Weyl modules
and is a more refined version of Theorem A giving a necessary and sufficient
condition for $X_{-\alpha_q,k} v_\omega^+$ to generate a Weyl module.

Theorem~A can easily be used in practice by
virtue of the following \linebreak irreducibility criterion of Weyl modules
over groups of type $A_{n-1}$ proved by J.C.Jantzen.

\begin{proposition}[\mbox{\cite[II.8.21]{Jantzen2}}]
The Weyl module $\Delta(\omega)$ is simple if and only if
for each $\alpha\in\Phi^+$ the following is satisfied:
Write $\<\omega+\rho,\alpha\>=ap^s+bp^{s+1}$,
where $a,b,s\in\Z^+$, $0<a<p$ and
$\rho$ is half the sum of the positive roots of $\Phi$.
Then there have to be $\beta_0,\beta_1,\ldots,\beta_b\in\Phi^+$
with $\<\omega+\rho,\beta_i\>=p^{s+1}$ for $1\le i\le b$ and $\<\omega+\rho,\beta_0\>=ap^s$,
with $\alpha=\sum_{i=0}^b\beta_i$ and with $\alpha-\beta_0\in\Phi\cup\{0\}$.
\end{proposition}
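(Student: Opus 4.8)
The plan is to deduce the criterion from the Jantzen sum formula, which controls the Jantzen filtration $\Delta(\omega)=\Delta(\omega)^0\supseteq\Delta(\omega)^1\supseteq\cdots$ of the Weyl module. Since $\Delta(\omega)/\Delta(\omega)^1\cong L(\omega)$, the module $\Delta(\omega)$ is simple if and only if $\Delta(\omega)^1=0$, hence if and only if $\sum_{i>0}\operatorname{ch}\Delta(\omega)^i=0$. First I would record this sum formula in the form
\[
\sum_{i>0}\operatorname{ch}\Delta(\omega)^i=\sum_{\alpha\in\Phi^+}\ \sum_{0<mp<\<\omega+\rho,\alpha\>}\nu_p(mp)\,\chi(\omega-mp\alpha),
\]
where $\nu_p$ is the $p$-adic valuation and $\chi$ denotes the Weyl (Euler) character, extended to all weights by $\chi(\mu)=\det(w)\,\chi(w\cdot\mu)$ under the dot action and $\chi(\mu)=0$ when $\mu+\rho$ is singular. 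The whole problem thus becomes to determine when the right-hand side vanishes.

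Next I would reduce every term $\chi(\omega-mp\alpha)$ to a character of a dominant weight: such a term is either zero (when $(\omega+\rho)-mp\alpha$ is singular) or equals $\pm\chi(\mu)$ for the unique dominant $\mu$ in its dot-orbit. Collecting terms according to the resulting dominant $\mu$, the sum vanishes precisely when, for each $\mu$, the signed multiplicities cancel. Here the type-$A$ combinatorics enters decisively: positive roots $\alpha=\alpha_i+\cdots+\alpha_{j-1}$ correspond to intervals $[i,j)$, reflections $s_\alpha$ to transpositions, and writing $\<\omega+\rho,\alpha\>=ap^s+bp^{s+1}$ with $0<a<p$ isolates the $p$-adic digits that govern which $m$ contribute with maximal valuation. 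Analyzing the leading (highest-valuation) contributions for a fixed $\alpha$, the surviving dominant weights are those reached by reflecting $\omega$ in the affine hyperplanes at the levels $ap^s$ and $p^{s+1}$; their cancellation forces, and is forced by, a decomposition $\alpha=\beta_0+\beta_1+\cdots+\beta_b$ into positive roots with $\<\omega+\rho,\beta_0\>=ap^s$, $\<\omega+\rho,\beta_i\>=p^{s+1}$ for $i\ge1$, and $\alpha-\beta_0\in\Phi\cup\{0\}$, the last condition expressing that $\beta_0$ sits at an end of the interval $[i,j)$. In the non-simple direction the uncancelled term can be made explicit: a Carter--Payne homomorphism $\Delta(\mu)\to\Delta(\omega)$ realizes the corresponding composition factor, confirming $\Delta(\omega)^1\neq0$.

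The main obstacle is the combinatorial bookkeeping of the second step: one must show that, across all roots $\alpha$ and all indices $m$, the contributions to each dominant $\chi(\mu)$ cancel exactly when the chain condition holds for every $\alpha$, with no spurious survivors arising from interactions between different roots. Controlling the signs $\det(w)$ and the valuations $\nu_p(mp)$ simultaneously, so that partial cancellations at one level do not reintroduce terms at another, is the delicate point, and it is precisely this analysis that produces the refined form of the criterion (the identities $\<\omega+\rho,\beta_i\>=p^{s+1}$ and the requirement $\alpha-\beta_0\in\Phi\cup\{0\}$) rather than a coarser purely numerical condition.
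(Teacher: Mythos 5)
This proposition is not proved in the paper at all: it is imported verbatim as Jantzen's irreducibility criterion \cite[II.8.21]{Jantzen2}, so there is no internal argument to compare against. Your proposal does follow the route by which the result is actually established in the literature --- reduce simplicity of $\Delta(\omega)$ to the vanishing of $\sum_{i>0}\operatorname{ch}\Delta(\omega)^i$ via the Jantzen filtration, invoke the sum formula, and analyse the cancellation of the signed Euler characters $\chi(\omega-mp\alpha)$ after moving each to its dominant representative. That framing, and the observation that vanishing of the sum is equivalent to $\Delta(\omega)^1=0$ because each $\operatorname{ch}\Delta(\omega)^i$ is a genuine (hence non-negative) character, are correct.

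However, the proposal stops exactly where the theorem begins. The entire mathematical content of the statement is the equivalence between the vanishing of the right-hand side of the sum formula and the existence, for every $\alpha\in\Phi^+$ with $\langle\omega+\rho,\alpha\rangle=ap^s+bp^{s+1}$, of a chain $\beta_0,\ldots,\beta_b\in\Phi^+$ with the prescribed values $\langle\omega+\rho,\beta_0\rangle=ap^s$, $\langle\omega+\rho,\beta_i\rangle=p^{s+1}$, $\alpha=\sum_i\beta_i$ and $\alpha-\beta_0\in\Phi\cup\{0\}$. You describe this step only as ``the main obstacle'' and ``the delicate point'' without carrying it out: nothing in the proposal shows why the surviving dominant weights are controlled by reflections at the levels $ap^s$ and $p^{s+1}$, why contributions attached to different roots $\alpha$ cannot cancel against one another, or how the condition $\alpha-\beta_0\in\Phi\cup\{0\}$ emerges from the sign bookkeeping. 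The appeal to Carter--Payne homomorphisms in the non-simple direction is also superfluous (non-vanishing of the character sum already forces $\Delta(\omega)^1\neq0$) and does not substitute for the missing analysis. As it stands the proposal is a correct strategy statement with the proof of its central claim deferred, so it does not constitute a proof; for this paper the honest course is simply to cite \cite[II.8.21]{Jantzen2}, as the author does.
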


{\bf Example I.} Let
$G=\SL_3(\F)$
and $\omega=a_1\omega_1+a_2\omega_2$
be a dominant weight such that $a_1,a_2<p$ and $a_1+a_2\ge p+b$,
where $b=0,\ldots,p-2$.
We put $k:=p+b-a_2$. Note that for any $l=0,\ldots,k-1$,
we have $0<a_1-l<p$ and thus $\<\omega,\alpha_1\>-l\not\equiv0\pmod p$.
Notice also that $0<k<p$. Indeed, $k\ge p$ implies $b\ge a_2$ and
$a_1+a_2\ge p+b\ge p+a_2$. Hence $a_1\ge p$, which is a contradiction.
Since the Weyl module $\Delta(\bar\omega)=\Delta(a_2\bar\omega_2)$ is simple,
Theorem~A(i) (where $m=0$) shows that the $G^{(1)}$-submodule of $L(\omega)$
generated by $X_{-\alpha_1,k}v^+_\omega$ is isomorphic to $\Delta((p+b)\bar\omega_2)$.
The latter module is already not simple. For example,
$X_{-\alpha_2,b+1}e^+_{(p+b)\bar\omega_2}$ is a nonzero $G^{(1)}$-primitive vector.
Thus $X_{-\alpha_2,b+1}X_{-\alpha_1,k}v^+_\omega$ is
a nonzero $G^{(1)}$-primitive vector of $L(\omega)$
of weight $\omega-(p+b-a_2)\alpha_1-(b+1)\alpha_2$.

There is an interesting connection  between this example and~\cite[Lemma 2.55]{Suprunenko_MinPol},
which is extensively used in that paper for calculation of
degrees of minimal polynomials.
In our notation,~\cite[Lemma~2.55]{Suprunenko_MinPol} is as follows:

\medskip
{
\noindent
\it
Let $M$ be an indecomposable $G^{(1)}$-module with highest
weight $(p+b)\bar\omega_2$ and $0\le b<p-1$.
Suppose that $X_{-\alpha_2,b+1}v^+\ne0$,
where $v^+$ is a highest weight vector of $M$.
Then $M\cong\Delta((p+b)\bar\omega_2)$.}
\medskip

\noindent
Therefore, if we somehow prove
that $X_{-\alpha_2,b+1}X_{-\alpha_1,k}v^+_\omega\ne0$,
then it will follow from this lemma that
the $G^{(1)}$-submodule of $L(\omega)$ generated
by $X_{-\alpha_1,k}v^+_\omega$ is a Weyl module
(without applying Theorem~A).

{\bf Example II.} Let $p=5$, $G=\SL_5(\F)$ and
$\omega=3\omega_1+3\omega_2+\omega_3+2\omega_4$. Take any
$k=1,\ldots,4$ and apply Theorem~A(i) for this $k$. The value
$k=4$ does not fit, since $\<\omega,\alpha_1\>-3=0$.

If we apply Theorem~A(i) for $k=1$, then we obtain that $L(\omega)$
contains a $G^{(1)}$-submodule isomorphic to $\Delta(\bar\omega+\bar\omega_2)$.
However, the last module is simple and we do not get any nonzero
$G^{(1)}$-primitive vectors in this way except the trivial $X_{-\alpha_1}v^+_\omega$.

The cases $k=2$ and $k=3$ on the contrary give new vectors.
In the former case, Theorem~A(i) implies that
$L(\omega)$ contains a $G^{(1)}$-submodule isomorphic
to $\Delta(\bar\omega+2\bar\omega_2)$. The last module contains nonzero
primitive vectors of weights $\bar\omega+2\bar\omega_2-\bar\alpha_2$ and
$\bar\omega+2\bar\omega_2-\bar\alpha_2-\bar\alpha_3-\bar\alpha_4$
by the Carter--Payne theorem (\cite{Carter_Payne}).
In the latter case, Theorem~A(i) implies that
$L(\omega)$ contains a $G^{(1)}$-submodule isomorphic
to $\Delta(\bar\omega+3\bar\omega_2)$. The last module contains nonzero
primitive vectors of weights $\bar\omega+3\bar\omega_2-2\bar\alpha_2$ and
$\bar\omega+3\bar\omega_2-2\bar\alpha_2-2\bar\alpha_3-2\bar\alpha_4$
by the Carter--Payne theorem (\cite{Carter_Payne}).

Thus except trivial nonzero $G^{(1)}$-primitive vectors of weights
$\omega-i\alpha_1$ with $i=0,\ldots,3$, the module $L(\omega)$
(which is not a Weyl module)
also contains nonzero $G^{(1)}$-primitive vectors of weights
$\omega-2\alpha_1-\alpha_2$, $\omega-2\alpha_1-\alpha_2-\alpha_3-\alpha_4$,
$\omega-3\alpha_1-2\alpha_2$ and $\omega-3\alpha_1-2\alpha_2-2\alpha_3-2\alpha_4$.

Computer calculations show that examples similar to Example~II are
quite abundant. Note that in both Examples~I and~II, we apply $X_{-\alpha_1,k}$ to $v^+_\omega$
only for $k>0$. The reason is that the case $k=0$ corresponds to
Smith's theorem (\cite{Smith1}) and the only primitive vectors of $L{\downarrow}_{G^{(q)}}$
produced in this way are those proportional to $v^+_\omega$.

We shall use the following result following directly from~\cite[Theorem~2]{Steinberg_eng}.
\vspace{-15pt}
\begin{proposition}\label{proposition:1}${}$
\!\!\!
\!\!
\!\!
\!\!
The products
$
\displaystyle
\prod\nolimits_{\alpha\in\Phi^+}\!X_{-\alpha,m_{-\alpha}}
\cdot
\prod\nolimits_{i=1}^{n{-}1}\!\!H_{\alpha_i,n_i}
\cdot
\prod\nolimits_{\alpha\in\Phi^+}\!X_{\alpha,m_\alpha},
$\\[4pt]
where $m_{-\alpha}$, $n_i$, $m_\alpha\in\Z^+$,
taken in any fixed order
form a basis of $\U$.
\end{proposition}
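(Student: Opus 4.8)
The plan is to deduce the proposition from Steinberg's integral basis theorem and then transport the result to $\F$ by base change. Steinberg's Theorem~2 supplies a $\Z$-basis of $\U_\Z$ consisting of monomials of exactly the displayed shape, namely a product of divided powers $X_{-\alpha}^{m_{-\alpha}}/m_{-\alpha}!$ over $\alpha\in\Phi^+$, followed by a product of the binomials $\binom{H_{\alpha_i}}{n_i}$, followed by a product of divided powers $X_\alpha^{m_\alpha}/m_\alpha!$ over $\alpha\in\Phi^+$, for one chosen ordering of the factors. Since $\U_\Z$ is a free $\Z$-module and the elements $X_{\alpha,m}$, $H_{\alpha_i,n_i}$ are by construction the images of these generators under $-\otimes_\Z\F$, tensoring a $\Z$-basis by $\F$ yields an $\F$-basis of $\U=\U_\Z\otimes_\Z\F$. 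This already settles the claim for one fixed order, so the only remaining point is to pass to an arbitrary order.

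First I would record the triangular structure exhibited by this basis: writing $\U^-$, $\U^0$, $\U^+$ for the subalgebras generated respectively by the $X_{-\alpha,m}$ with $\alpha\in\Phi^+$, by the $H_{\alpha_i,n_i}$, and by the $X_{\alpha,m}$ with $\alpha\in\Phi^+$, the reference basis shows that multiplication $\U^-\otimes\U^0\otimes\U^+\to\U$ is an isomorphism of $\F$-spaces, and that the three families of block-monomials are bases of the respective blocks. Since the statement keeps the relative order of the three blocks fixed, it therefore suffices to prove that each block admits a monomial basis for an \emph{arbitrary} ordering of its own factors. For the toral block this is immediate: the $H_{\alpha_i,n_i}$ pairwise commute, so reordering leaves every monomial unchanged.

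For the unipotent blocks I would argue by a straightening argument inside $\U^-$, the case of $\U^+$ being symmetric. Grade $\U^-$ by the root lattice, assigning $X_{-\alpha,m}$ the weight $-m\alpha$; each graded component $\U^-_\mu$ is finite-dimensional, and for every ordering the monomials of weight $\mu$ are indexed by the same finite set of exponent tuples $(m_{-\alpha})_{\alpha\in\Phi^+}$ with $\sum_\alpha m_{-\alpha}\alpha=-\mu$. As the number of such monomials equals $\dim\U^-_\mu$, it is enough to prove their linear independence. I would do this by rewriting a monomial taken in the new order back into the reference order via the Chevalley commutator formula for divided powers: interchanging two adjacent factors $X_{-\beta,s}X_{-\gamma,t}$ reproduces the same pair in the opposite order (with coefficient $1$) plus correction terms built from divided powers $X_{-(\beta+\gamma),\cdot}$, which stay in $\U^-$ and carry the same weight. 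In type $A_{n-1}$ all relevant structure constants are $\pm1$, and each correction replaces one $X_{-\beta}$ and one $X_{-\gamma}$ by one $X_{-(\beta+\gamma)}$, so the statistic $\sum_\alpha m_{-\alpha}$ (the total number of divided-power factors) strictly decreases on every correction term. Ordering exponent tuples of fixed weight by this statistic (refined by the new order), the transition matrix from the new-order monomials to the reference basis is unitriangular with unit diagonal, hence invertible over $\Z$. This gives linear independence in each $\U^-_\mu$, and thus a basis of $\U^-$ for the arbitrary order.

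The main obstacle I anticipate is precisely the bookkeeping in this last step: fixing a partial order on exponent tuples for which the straightening is genuinely unitriangular, and checking that every correction term produced by the divided-power commutator formula is strictly lower in that order. Everything else—Steinberg's theorem, the base change to $\F$, and the trivial toral reordering—is formal.
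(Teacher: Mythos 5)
Your proposal is correct and follows essentially the same route as the paper, which derives the statement directly from Steinberg's Theorem~2 (Kostant's $\Z$-form basis) together with base change along $\U_\Z\otimes_\Z\F$. The additional straightening argument you give for passing to an arbitrary order of the factors is sound (the total number of divided-power factors does strictly drop on every commutator correction term in type $A_{n-1}$), but it is not needed as a separate step, since Steinberg's theorem already asserts the basis property for the monomials taken in an arbitrary fixed order.
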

We denote by $\U^+$ the subspace of $\U$ spanned by the above products with unitary first and second factors.
Given integers $q_1,\ldots,q_m\in\{1,\ldots,n-1\}$, we denote by $\U^{(q_1,\ldots,q_m)}$ the subspace of $\U$
spanned by all the above products such that $m_\alpha=0$ unless $\alpha\in\Phi^{(q_1,\ldots,q_m)}$
and $n_i=0$ unless $i\in\{1,\ldots,n-1\}\setminus\{q_1,\ldots,q_m\}$.
One can easily see that $\U^+$ and $\U^{(q_1,\ldots,q_m)}$ are subalgebras of $\U$.
We let $\U^{(q_1,\ldots,q_m)}$ act on any rational $G^{(q_1,\ldots,q_m)}$-module according to~(\ref{equation:-1}).
In~the sequel, we shall mean the $X(T)$-grading of $\U$
in which $X_{\alpha,m}$ has weight $m\alpha$ and
$H_{\alpha_i,m}$ has weight $0$.

For each $\omega\in X^+(T)$, we denote by $\nabla(\omega)$
the module contravariantly dual to the Weyl module $\Delta(\omega)$ and
denote by $\pi^\omega:\Delta(\omega)\to L(\omega)$ the $G$-module
epimorphism such that $\pi^\omega(e^+_\omega)=v^+_\omega$.
We also denote by $V^\tau$ for $\tau\in X(T)$
the $\tau$-weight space of a rational $T$-module $V$.

A vector $v$ of a rational $G$-module is called {\it $G$-primitive} if $\F v$ is fixed by the Borel subgroup $B$.
We use similar terminology for $G^{(q_1,\ldots,q_m)}$ and omit the prefix when it is clear which group we mean.
In view of the universal property of Weyl modules~\cite[Lemma~II.2.13 b]{Jantzen2},
we can speak about primitive vectors of a rational module $V$ instead of homomorphisms from Weyl modules to $V$
(we use this language in Theorem~B).

Note that Theorems~A and~B in the case $q=n-1$ are easy consequences of
the theorems in the case $q=1$ by a standard argument involving
twisting with the automorphism $g \mapsto w_0(g^{-1})^{\rm t}w_0^{-1}$, where ${}^{\rm t}$ stands for the transposition
and $w_0$ stands for for the longest element of the Weyl group.
Therefore in the remainder of the article we consider only the case $q=1$.

{\bf Acknowledgments.} The author would like to thank Irina
Suprunenko for drawing his attention to this problem and helpful
discussions.

\section{Proof of the main results}\label{Proof of the main results}

We fix a weight $\omega=a_1\omega_1+\cdots+a_{n-1}\omega_{n-1}$ of $X^+(T)$
and an integer $k\in\Z^+$. The restriction of $\omega$ to $T^{(1)}$ is
$\bar\omega=a_2\bar\omega_2+\cdots+a_{n-1}\bar\omega_{n-1}$.
Clearly, $X_{-\alpha_1,k}v^+_\omega$ is a (possibly zero) $G^{(1)}$-primitive vector
of $T^{(1)}$-weight $\bar\omega+k\bar\omega_2$.
By the universal property of Weyl modules~\cite[Lemma~II.2.13 b]{Jantzen2},
there exists the homomorphism
$\phi_k^\omega:\Delta(\bar\omega+k\bar\omega_2)\to L(\omega)$
of $G^{(1)}$-modules that takes
$e^+_{\bar\omega+k\bar\omega_2}$ to $X_{-\alpha_1,k}v^+_\omega$.
Obviously,
\begin{equation}\label{equation:0}
KG^{(1)}X_{-\alpha_1,k}v^+_\omega\cong\Delta(\bar\omega+k\bar\omega_2)/\ker\phi^\omega_k.
\end{equation}

Problem~\ref{problem:1} can now be reformulated as follows: {\it Describe the weights $\omega\in X^+(T)$
such that $\ker\phi^\omega_k=0$.}
The analog of this problem for $\Delta(\omega)$ has a trivial solution.
\begin{lemma}\label{lemma:0}
The $G^{(1)}$-submodule of $\Delta(\omega)$ generated
by $X_{-\alpha_1,k}e^+_\omega$ is isomorphic to
$\Delta(\bar\omega+k\bar\omega_2)$
if
$0\le k\le\<\omega,\alpha_1\>$
and is zero otherwise.
\end{lemma}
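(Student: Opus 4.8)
The plan is to first verify that $u:=X_{-\alpha_1,k}e^+_\omega$ is a $G^{(1)}$-primitive vector of weight $\bar\omega+k\bar\omega_2$ and to decide exactly when it vanishes. Primitivity is a short commutation check: for $i\ge2$ the element $\alpha_i-\alpha_1$ is never a root, so in $\sl_n(\C)$ one has $[X_{\alpha_i},X_{-\alpha_1}]=0$ (explicitly $[e_{i,i+1},e_{2,1}]=0$), whence $X_{\alpha_i}$ commutes with every divided power $X_{-\alpha_1,k}$ and $X_{\alpha_i}u=X_{-\alpha_1,k}X_{\alpha_i}e^+_\omega=0$. Thus $u$ is killed by $\U^{(1)}\cap\U^+$ and has the stated weight. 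For the vanishing I would restrict to the $\sl_2$ attached to $\alpha_1$ and argue over the $\Z$-form: in $\Delta_\Z(\omega)$ the vectors $X_{-\alpha_1,j}e^+_\omega$ form part of a $\Z$-basis, are nonzero over $\C$ precisely for $0\le j\le\<\omega,\alpha_1\>$, and vanish for larger $j$; reducing modulo $p$ shows $u\ne0$ iff $0\le k\le\<\omega,\alpha_1\>$ and $u=0$ otherwise. This already settles the case $k>\<\omega,\alpha_1\>$.

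Assume now $0\le k\le\<\omega,\alpha_1\>$, so $u\ne0$. By the universal property of Weyl modules there is a surjection $\psi\colon\Delta(\bar\omega+k\bar\omega_2)\to\U^{(1)}u$ sending the highest weight vector to $u$, exactly as for $\phi^\omega_k$ above, and it remains to prove $\psi$ is injective. Since $u$ is $\U^+$-primitive one has $\U^{(1)}u=(\U^{(1)}\cap\U^-)\,u$, so this submodule is spanned by the vectors $\bigl(\prod_{\alpha\in\Phi^{(1)}\cap\Phi^+}X_{-\alpha,s_\alpha}\bigr)X_{-\alpha_1,k}e^+_\omega$. Choosing the order in Proposition~\ref{proposition:1} so that the roots of $\Phi^{(1)}$ precede the roots $\alpha_1+\cdots+\alpha_{j-1}$ involving $\alpha_1$, with $\alpha_1$ itself placed last, these vectors become images of genuine PBW monomials in which every root involving $\alpha_1$ other than $\alpha_1$ has exponent $0$ and $\alpha_1$ has exponent exactly $k$. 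Here the relevant structural fact is that the root vectors $X_{-(\alpha_1+\cdots+\alpha_{j-1})}=e_{j,1}$ span a copy of the natural $G^{(1)}$-module and commute with one another.

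The injectivity of $\psi$ is the only real point, and I expect it to be the main obstacle. The cleanest route is again to pass to the $\Z$-form: over $\C$ the module $\Delta(\omega)$ is irreducible and the classical $\SL_n\!\downarrow\!\SL_{n-1}$ branching rule identifies the $G^{(1)}$-submodule generated by $u$ with the irreducible module $\cong\Delta(\bar\omega+k\bar\omega_2)$, so $\psi$ is an isomorphism over $\C$ and the induced map $\psi_\Z\colon\Delta_\Z(\bar\omega+k\bar\omega_2)\to\Delta_\Z(\omega)$ is injective. To descend to $\F$ one needs the image of $\psi_\Z$ to be a pure ($\Z$-saturated) sublattice, equivalently that $\mathrm{coker}\,\psi_\Z$ is torsion free; this is precisely the step that fails for $L(\omega)$ and makes the statement for $\Delta(\omega)$ ``trivial'' by comparison. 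I would establish purity by exhibiting a $\Z$-basis of $\Delta_\Z(\omega)$ adapted to the chain $\SL_n\supset\SL_{n-1}$ (the PBW/standard-monomial basis indexed as in the previous paragraph), under which the monomials spanning $(\U^{(1)}\cap\U^-)\,u$ form part of a basis; then this submodule is spanned by distinct basis vectors, is $\Z$-pure, and reduces injectively modulo $p$. Matching this with the standard basis of $\Delta(\bar\omega+k\bar\omega_2)$ gives $\dim\U^{(1)}u=\dim\Delta(\bar\omega+k\bar\omega_2)$, so the surjection $\psi$ is an isomorphism, as required.
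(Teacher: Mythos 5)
Your first two steps are fine: the commutation check showing $u=X_{-\alpha_1,k}e^+_\omega$ is $G^{(1)}$-primitive of weight $\bar\omega+k\bar\omega_2$, and the observation that the weight space $\Delta(\omega)^{\omega-k\alpha_1}$ is spanned over $\Z$ by $X_{-\alpha_1,k}e^+_\omega$ and survives reduction mod $p$ exactly when $0\le k\le\<\omega,\alpha_1\>$, are both correct and match the paper's character-theoretic argument in substance. You have also correctly identified that the whole content of the lemma is the injectivity of $\psi$, and correctly reformulated it as purity of the sublattice $\U_\Z^{(1)}u$ inside $\Delta_\Z(\omega)$.

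But that is where the proof stops being a proof. You establish purity by asserting that ``the monomials spanning $(\U^{(1)}\cap\U^-)\,u$ form part of a $\Z$-basis'' of $\Delta_\Z(\omega)$, and this does not follow from anything you have said. The vectors $\bigl(\prod_{\alpha\in\Phi^{(1)}\cap\Phi^+}X_{-\alpha,s_\alpha}\bigr)X_{-\alpha_1,k}e^+_\omega$ are images of PBW monomials in $\U_\Z^-$, but $\Delta_\Z(\omega)$ is a proper quotient of $\U_\Z^-e^+_\omega$: these vectors are massively linearly dependent, and a $\Z$-basis of the Weyl module is indexed by standard tableaux (Carter--Lusztig), not by arbitrary exponent tuples. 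Selecting from your spanning set a linearly independent subset that moreover extends to a basis of the ambient lattice is precisely the purity statement you need; in fact purity of $\U_\Z^{(1)}u$ is \emph{equivalent} to the lemma holding for every prime $p$ at once, so ``exhibit an adapted basis'' is a restatement of the conclusion, not an argument for it. (Even the natural candidate --- the standard basis vectors $F_te^+_\omega$ lying in the level $V^{(k)}$ --- will not do directly, since $V^{(k)}$ is in general strictly larger than $\U^{(1)}u$.) The paper avoids this entirely: it invokes Mathieu's theorem that $\Delta(\omega){\downarrow}_{G^{(1)}}$ has a Weyl filtration, deduces the same for the direct summand $V^{(k)}$, notes that $\bar\omega+k\bar\omega_2$ is the unique maximal weight of $V^{(k)}$ with one-dimensional weight space, and applies~\cite[II.4.16~Remark~4]{Jantzen2} to conclude that $\Delta(\bar\omega+k\bar\omega_2)$ sits inside $V^{(k)}$ as the submodule generated by $X_{-\alpha_1,k}e^+_\omega$ --- a basis-free argument (deliberately so, as the author remarks, to keep it valid for arbitrary type). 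To salvage your integral-form route you would need an actual proof of purity, e.g.\ via an explicit combinatorial basis of $\U_\Z^{(1)}u$ compatible with the Carter--Lusztig basis, which is a genuine piece of work that the proposal only names.
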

\begin{proof} Suppose temporarily that $\ch\F=0$. Then $\Delta(\omega)$ is irreducible.
Since $X_{\alpha_1,k}X_{-\alpha_1,k}e^+_\omega=\binom{a_1}ke^+_\omega$,
we have (recall that $\alpha_1$ is simple)
\begin{equation}\label{equation:1}
\dim\Delta(\omega)^{\omega-k\alpha_1}=
\left\{
\begin{array}{ll}
1&\mbox{ if } 0\le k\le a_1;\\[3pt]
0&\mbox{ otherwise}.
\end{array}
\right.
\end{equation}
Now let us return to the situation where $\ch\F=p>0$. Since the character of a Weyl module
does not depend on $\ch\F$,~(\ref{equation:1}) holds again.
Therefore, $X_{-\alpha_1,k}e^+_\omega=0$ if $k>a_1$.
Thus we assume $0\le k\le a_1$ for the rest of the proof.
Consider the decomposition
$\Delta(\omega)=\bigoplus_{b\in\Z^+}V^{(b)}$, where
$$
V^{(b)}=\bigoplus\nolimits_{b_2,\ldots,b_{n-1}\in\Z^+}\Delta(\omega)^{\omega-b\alpha_1-b_2\alpha_2-\cdots-b_{n-1}\alpha_{n-1}}
$$
(the $b$th level of $\Delta(\omega)$). Note that each $V^{(b)}$
is a $G^{(1)}$-module. By~(\ref{equation:1}), $X_{-\alpha_1,k}e^+_\omega$
is a nonzero vector of $V^{(k)}$ having $T^{(1)}$-weight
$\bar\omega+k\bar\omega_2$.
Moreover, the weight space of $V^{(k)}$ corresponding to this weight is one-dimensional.
Any other $T^{(1)}$-weight of $V^{(k)}$ is less than this weight.
It follows from~\cite{Mathieu2}
(see also~\cite[Proposition II.4.24]{Jantzen2}) that $\Delta(\omega){\downarrow}_{G^{(1)}}$
has a Weyl filtration. By~\cite[Proposition II.4.16(iii)]{Jantzen2},
its direct summand $V^{(k)}$ also has a Weyl filtration (as a $G^{(1)}$-module).
Any such filtration contains one factor isomorphic to $\Delta(\bar\omega+k\bar\omega_2)$
and, possibly, some other factors each isomorphic to $\Delta(\tau)$ with $\tau<\bar\omega+k\bar\omega_2$.
Applying~\cite[II.4.16~Remark~4]{Jantzen2} to the dual module ${V^{(k)}}^*$,
we obtain that $V^{(k)}$ contains a $G^{(1)}$-submodule isomorphic to
$\Delta(\bar\omega+k\bar\omega_2)$. Clearly, this submodule is generated
by $X_{-\alpha_1,k}e^+_\omega$.
\end{proof}

We deliberately did not use a basis of $\Delta(\omega)$
in the proof of the above theorem to make it valid
for $G$ of arbitrary type.

\begin{lemma}\label{lemma:1}
The modules $KG^{(1)}X_{-\alpha_1,k}v^+_\omega$ and $KG^{(1)}X_{-\alpha_1,k}e^+_\omega$ decompose
into direct sums of their $T$-weight subspaces.
These sums are exactly the decompositions into $T^{(1)}$-weight subspaces.
\end{lemma}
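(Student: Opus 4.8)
The plan is to realize both modules as $\U^{(1)}$-modules and to exploit the $X(T)$-grading of $\U^{(1)}$. Write $v:=X_{-\alpha_1,k}v^+_\omega$ and $e:=X_{-\alpha_1,k}e^+_\omega$; each is a $T$-weight vector of weight $\omega-k\alpha_1$. Since $G^{(1)}$ acts through its root elements $x_{\pm\alpha_i}(t)$ with $2\le i\le n-1$, which by~(\ref{equation:-1}) expand into divided powers $X_{\pm\alpha_i,m}$, the $G^{(1)}$-submodule $KG^{(1)}v$ coincides with $\U^{(1)}v$ (via a standard Vandermonde argument on the parameters $t$), and likewise $KG^{(1)}e=\U^{(1)}e$. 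First I would record this identification.

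Next I would use that $\U^{(1)}$ is homogeneous for the $X(T)$-grading, all of its root generators having weights in the sublattice $R:=\sum_{i=2}^{n-1}\Z\alpha_i$. Thus $\U^{(1)}=\bigoplus_{\gamma\in R}\U^{(1)}_\gamma$ and $\U^{(1)}_\gamma v\subseteq L(\omega)^{\omega-k\alpha_1+\gamma}$. Because the $T$-weight spaces of $L(\omega)$ are linearly independent, the sum $\U^{(1)}v=\sum_{\gamma\in R}\U^{(1)}_\gamma v$ is automatically direct and is exactly the decomposition of $KG^{(1)}v$ into $T$-weight subspaces; in particular this module is $T$-stable, even though $T\not\subseteq G^{(1)}$. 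The identical computation applies verbatim inside $\Delta(\omega)$ to $KG^{(1)}e$.

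It remains to match these with the $T^{(1)}$-weight spaces. Every vector of $L(\omega)^\tau$ has $T^{(1)}$-weight $\bar\tau$, so each summand $\U^{(1)}_\gamma v$ lies in the $T^{(1)}$-weight space of weight $\bar\omega+k\bar\omega_2+\bar\gamma$. The step I expect to be the crux is the injectivity of the restriction map $\gamma\mapsto\bar\gamma$ on $R$: this holds because $\bar\alpha_2,\ldots,\bar\alpha_{n-1}$ are the simple roots of $G^{(1)}$ and hence linearly independent in $X(T^{(1)})\otimes_\Z\mathbb Q$ (equivalently, the Cartan matrix of $G^{(1)}$ is nonsingular). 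Granting this, distinct $T$-weights of the module restrict to distinct $T^{(1)}$-weights, so each one-parameter piece $\U^{(1)}_\gamma v$ is simultaneously a \emph{full} $T^{(1)}$-weight space, and the two direct-sum decompositions coincide termwise. The argument for $KG^{(1)}X_{-\alpha_1,k}e^+_\omega$ is the same.
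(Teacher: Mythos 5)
Your proof is correct and takes essentially the same route as the paper's: both arguments come down to the observation that every $T$-weight occurring in these modules has $\alpha_1$-coefficient equal to $k$, so the restriction to $T^{(1)}$ determines the weight by the linear independence of $\bar\alpha_2,\ldots,\bar\alpha_{n-1}$. You simply spell out the preliminary steps (the identification $KG^{(1)}v=\U^{(1)}v$ and the $X(T)$-grading of $\U^{(1)}$) that the paper treats as implicit.
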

\begin{proof}
The only fact we need to prove is that
$\overline{\omega-b_1\alpha_1-\cdots-b_{n-1}\alpha_{n-1}}=\overline{\omega-c_1\alpha_1-\cdots-c_{n-1}\alpha_{n-1}\vphantom{b}}$
and $b_1=c_1$ imply $b_i=c_i$ for any $i{=}1,{\ldots},n{-}1$. This is obvious, since the first equality is equivalent~to
$\bar\omega+b_1\bar\omega_2-b_2\bar\alpha_2-{\cdots}-b_{n-1}\bar\alpha_{n-1}=
\bar\omega+c_1\bar\omega_2-c_2\bar\alpha_2-{\cdots}-c_{n-1}\bar\alpha_{n-1}$.
\end{proof}

Before proving Theorem~B, we need to describe the standard bases for Weyl modules over $G^{(1)}$.
Let $\kappa=d_2\bar\omega_2+\cdots+d_{n-1}\bar\omega_{n-1}$ be a weight of $X^+(T^{(1)})$.
A sequence $\lambda=(\lambda_2,\ldots,\lambda_n)$ of nonnegative integers is called {\it coherent} with $\kappa$ if
$d_i=\lambda_i-\lambda_{i+1}$ for any $i=2,\ldots,n-1$. The {\it diagram} of $\lambda$ is the set
$$
[\lambda]=\{(i,j)\in\Z^2\|2\le i\le n\mbox{ and }1\le j\le \lambda_i\}.
$$
We shall think of $[\lambda]$ as an array of boxes.
For example, if $\lambda{=}(5,3,2,0)$ then

{
\begin{center}
\setlength{\unitlength}{0.4mm}
\begin{picture}(45,0)
\put(0,0){\line(1,0){50}}
\put(0,-10){\line(1,0){50}}
\put(0,-20){\line(1,0){30}}
\put(0,-30){\line(1,0){20}}
\put(0,0){\line(0,-1){30}}
\put(10,0){\line(0,-1){30}}
\put(20,0){\line(0,-1){30}}
\put(30,0){\line(0,-1){20}}
\put(40,0){\line(0,-1){10}}
\put(50,0){\line(0,-1){10}}
\put(-25,-17){$[\lambda]=$}
\end{picture}
\end{center}
}

\vspace{1.3cm}
\noindent
Note that in our terminology the top row of this diagram is the second row.

A {\it $\lambda$-tableau} is a function $t:[\lambda]\to\{2,\ldots,n\}$,
which we regard as the diagram $[\lambda]$ filled with
integers in $\{2,\ldots,n\}$.
A $\lambda$-tableau $t$ is called {\it row standard} if its entries weakly
increase along the rows, that is $t(i,j)\le t(i,j')$ if $j<j'$.
A $\lambda$-tableau $t$ is called {\it regular row standard} if it is
row standard and every entry in row $i$ of $t$ is at least $i$.
Finally, a $\lambda$-tableau $t$
is called {\it standard} if it is row standard
and its entries strictly increase down the columns,
that is $t(i,j)<t(i',j)$ if $i<i'$. For example,

{
\begin{center}
\setlength{\unitlength}{0.4mm}
\begin{picture}(45,0)
\put(0,0){\line(1,0){50}}
\put(0,-10){\line(1,0){50}}
\put(0,-20){\line(1,0){30}}
\put(0,-30){\line(1,0){20}}
\put(0,0){\line(0,-1){30}}
\put(10,0){\line(0,-1){30}}
\put(20,0){\line(0,-1){30}}
\put(30,0){\line(0,-1){20}}
\put(40,0){\line(0,-1){10}}
\put(50,0){\line(0,-1){10}}
\put(-21,-17){$t=$}
\put(3,-8.5){2} \put(13,-8.5){3} \put(23,-8.5){3} \put(33,-8.5){4}\put(43,-8.5){5}
\put(3,-18.5){3}\put(13,-18.5){4}\put(23,-18.5){4}
\put(3,-28.5){4}\put(13,-28.5){5}
\end{picture}
\end{center}
}

\vspace{1.3cm}

\noindent
is a standard $(5,3,2,0)$-tableau.
For any $\lambda$-tableau $t$, we put
$$
F_t:=\prod\nolimits_{2\le a<b\le n}X_{-\alpha_a-\cdots-\alpha_{b-1},N_{a,b}},
$$
where $N_{a,b}$ is the number of entries $b$ in row $a$ of $t$,
$X_{-\alpha_a-\cdots-\alpha_{b-1},N_{a,b}}$
precedes
$X_{-\alpha_c-\cdots-\alpha_{d-1},N_{c,d}}$
if $b<d$ or $b=d$ and $a<c$.

\begin{remark}\label{remark:2} One can easily see that the number of entries greater than $2$ in the second (top) row of $t$
is exactly minus the coefficient at $\alpha_2$ in the weight of $F_t$.
\end{remark}

For $t$ as in the above example, we have
$$
F_t=X_{-\alpha_2,2}\,X_{-\alpha_2-\alpha_3}\,X_{-\alpha_3,2}\,X_{-\alpha_2-\alpha_3-\alpha_4}\,X_{-\alpha_4}.
$$

\begin{proposition}[\cite{Carter_Lusztig}]\label{basis of Weyl module}
Let $\kappa$ be a weight of $X^+(T^{(1)})$ and $\lambda=(\lambda_2,\ldots,\lambda_n)$ be
a sequence coherent with $\kappa$.
Then the vectors $F_te^+_\kappa$, where $t$ is a standard $\lambda$-tableau,
form a basis of $\Delta(\kappa)$.
\end{proposition}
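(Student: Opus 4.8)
The plan is to prove two statements and combine them by a dimension count, thereby avoiding any separate linear-independence argument. I would establish: (a) the vectors $F_te^+_\kappa$ with $t$ standard span $\Delta(\kappa)$; and (b) the number of standard $\lambda$-tableaux equals $\dim\Delta(\kappa)$. Since a spanning set whose cardinality equals the dimension is automatically a basis, (a) and (b) together finish the proof, and in particular no triangularity or dual-pairing argument for independence is needed.

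For (b), I would use that $G^{(1)}$ is of type $A_{n-2}$ and that the formal character of a Weyl module does not depend on $\ch\F$. Hence this character is the one predicted by Weyl's formula, namely the Schur polynomial $s_\lambda$ in the natural coordinates of the torus $T^{(1)}$. Evaluating at the identity gives $\dim\Delta(\kappa)=s_\lambda(1,\ldots,1)$, which counts the fillings of $[\lambda]$ with entries in $\{2,\ldots,n\}$ that weakly increase along rows and strictly increase down columns. By the definitions in the text these are precisely the standard $\lambda$-tableaux, so (b) holds.

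The substance of the proof is (a). First I would note that $\Delta(\kappa)$ is cyclic, generated by $e^+_\kappa$ over the negative part of $\U^{(1)}$; by Proposition~\ref{proposition:1} applied inside $\U^{(1)}$, this negative part has a PBW basis of ordered monomials in the divided powers $X_{-\alpha_a-\cdots-\alpha_{b-1},m}$ with $2\le a<b\le n$. Thus $\Delta(\kappa)$ is spanned by applying these monomials to $e^+_\kappa$. Each monomial in which at most $\lambda_a$ operators lower out of row $a$ (that is, $\sum_{b>a}m_{a,b}\le\lambda_a$) is exactly $F_te^+_\kappa$ for a unique regular row-standard $t$, the exponents being read off the row contents as in Remark~\ref{remark:2}. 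The remaining ``overflow'' monomials, which do not fit in a diagram of shape $\lambda$, I expect to annihilate $e^+_\kappa$ by a weight argument using the highest-weight relations $\<\kappa,\alpha_i\>=d_i$. This would reduce spanning to the regular row-standard $F_te^+_\kappa$.

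The hard part, and the step following Carter--Lusztig~\cite{Carter_Lusztig} most closely, is the straightening from row-standard to standard tableaux. Given a row-standard but non-standard $t$, some column fails to increase strictly; applying the commutation (Garnir-type) relations among the divided powers $X_{-\beta,m}$ inside $\U^{(1)}$, I would rewrite $F_te^+_\kappa$ as an $\F$-linear combination of vectors $F_se^+_\kappa$ with $s$ strictly smaller than $t$ in a fixed total order on row-standard tableaux of shape $\lambda$. Because that order well-orders a finite set, iterating terminates and expresses every $F_te^+_\kappa$ through standard ones, giving (a). The delicate points I anticipate are producing relations that genuinely lower the chosen order without introducing entries outside $\{2,\ldots,n\}$, and controlling the interaction between straightening and the vanishing of the overflow monomials; these, rather than the counting step (b), are where the real work lies.
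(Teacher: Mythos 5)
The paper does not prove this proposition at all: it is imported as a black box from Carter--Lusztig \cite{Carter_Lusztig}, so there is no internal argument to measure yours against. Your skeleton --- spanning by standard monomials plus a count of standard tableaux against $\dim\Delta(\kappa)$ via the characteristic-independence of Weyl characters --- is the classical route and is sound in outline; step (b) in particular is fine, since the paper's ``standard'' tableaux are exactly the semistandard fillings counted by $s_\lambda(1,\ldots,1)$ in $n-1$ variables.

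The genuine gap is in step (a), in your treatment of the ``overflow'' monomials. You claim that a PBW monomial with $\sum_{b>a}m_{a,b}>\lambda_a$ for some row $a$ annihilates $e^+_\kappa$ for weight reasons. This is false. Overflow in row $a$ only forces the multiplicity of the letter $a$ in the resulting content to drop below $\sum_{c<a}m_{c,a}$, not below zero, so the target weight can perfectly well be a weight of $\Delta(\kappa)$ of positive multiplicity. Concretely, take $n=4$ and $\lambda=(2,1,0)$, so that $\Delta(\kappa)=\Delta(\bar\omega_2+\bar\omega_3)$ is the Weyl module of $G^{(1)}\cong\SL_3(\F)$ with highest weight the highest root $\theta=\bar\alpha_2+\bar\alpha_3$. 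The monomial $X_{-\alpha_3,2}X_{-\alpha_2,1}$ has $m_{3,4}=2>1=\lambda_3$, i.e.\ it overflows row $3$; yet its target weight $\theta-\bar\alpha_2-2\bar\alpha_3$ is extremal (its content $(1,0,2)$ is a permutation of $(2,1,0)$), hence a nonzero one-dimensional weight space, and a direct $\sl_2$-computation ($X_{-\alpha_2}e^+_\kappa$ is a nonzero $\alpha_3$-highest vector of $\alpha_3$-weight $2$) shows $X_{-\alpha_3,2}X_{-\alpha_2,1}e^+_\kappa\ne0$; in fact it equals $\pm F_te^+_\kappa$ for the standard tableau $t$ with rows $(2,4)$ and $(4)$. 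So overflow monomials are in general nonzero and must be rewritten by the straightening algorithm together with the non-standard row-standard ones; the shortcut you propose does not exist, and the ``interaction'' you flag as delicate is in fact the whole difficulty. Since the straightening itself (the Garnir-type relations and the termination of the rewriting) is also only sketched, the substantive half of your plan is not established, and the dimension count alone cannot close it.
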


Now suppose that $m=3,\ldots,n$ and $\lambda_2-\lambda_3=d_2\ge1$.
For any regular row standard $\lambda$-tableau $t$,
we define $\rho_m(t)$ to be the $(\lambda_2-1,\lambda_3,\ldots,\lambda_n)$-tableau obtained from $t$ by removing one entry
$m$ from the second row, if such removal is possible, and shifting all elements of the resulting row to the left.

One can easily check that for any $2{\le}s{<}m{\le}n$ and $N\in\Z^+$, there holds
\begin{equation}\label{equation:3}
[X_{\alpha_1+\cdots+\alpha_{m-1}},X_{-\alpha_s-\cdots-\alpha_{m-1},N}]=X_{-\alpha_s-\cdots-\alpha_{m-1},N-1}X_{\alpha_1+\cdots+\alpha_{s-1}}.
\end{equation}
Note that~(\ref{equation:3}) holds for any $N\in\Z$ if we define $X_{\alpha,N}:=0$ for $N<0$. Let $I^+$
denote the left ideal of $\U$ generated by the elements $X_{\alpha,N}$ with $\alpha\in\Phi^+$ and $N>0$.

\begin{lemma}\label{lemma:2}
Let $m=3,\ldots,n$, $\lambda_2-\lambda_3=d_2\ge1$, $t$ be a regular row standard $\lambda$-tableau and $1\le k$.
We have
$$
X_{\alpha_1+\cdots+\alpha_{m-1}}F_tX_{-\alpha_1,k}\=F_{\rho_m(t)}X_{-\alpha_1,k-1}(H_{\alpha_1}+1-k)\pmod{I^+}
$$
if $\rho_m(t)$ is well-defined and
$$
X_{\alpha_1+\cdots+\alpha_{m-1}}F_tX_{-\alpha_1,k}\=0\pmod{I^+}
$$
otherwise.
\end{lemma}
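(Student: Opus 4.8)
The plan is to push the raising operator $X_{\alpha_1+\cdots+\alpha_{m-1}}$ to the right, first through $F_t$ and then through $X_{-\alpha_1,k}$, and to exploit the following principle: since $I^+$ is a \emph{left} ideal generated by the $X_{\beta,N}$ with $\beta\in\Phi^+$ and $N>0$, any product that ends on the right in such a divided power is congruent to $0$ modulo $I^+$. Thus only those rearrangements in which the trailing raising operator is genuinely absorbed can contribute, and I expect exactly one of them to survive.

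First I would sort the factors of $F_t$ by their superscript root. Thinking of $X_{\alpha_1+\cdots+\alpha_{m-1}}$ as $e_{1,m}$ and of $X_{-\alpha_a-\cdots-\alpha_{b-1},N}$ as a divided power of $e_{b,a}$, the Lie bracket $[e_{1,m},e_{b,a}]$ vanishes whenever $b\ne m$ (the term $\delta_{a1}e_{b,m}$ drops out because $a\ge2$), and a vanishing bracket in $\sl_n(\C)$ forces commutation with all divided powers. Hence $X_{\alpha_1+\cdots+\alpha_{m-1}}$ commutes with every factor of $F_t$ except the block $B=\prod_{a=2}^{m-1}X_{-\alpha_a-\cdots-\alpha_{m-1},N_{a,m}}$ of factors with $b=m$, which by the chosen ordering appear consecutively and in order of increasing $a$. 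Writing $F_t=PBQ$, where $P$ and $Q$ collect the factors with $b<m$ and $b>m$ respectively, I would move $X_{\alpha_1+\cdots+\alpha_{m-1}}$ past $P$ for free and then expand $X_{\alpha_1+\cdots+\alpha_{m-1}}B$ by iterating~(\ref{equation:3}) with $s=a$. Because the operator $X_{\alpha_1+\cdots+\alpha_{a-1}}=e_{1,a}$ produced at the $a$th step again commutes with every later factor of $B$ and with all of $Q$, the expansion telescopes to
$$
X_{\alpha_1+\cdots+\alpha_{m-1}}B=BX_{\alpha_1+\cdots+\alpha_{m-1}}+\sum_{a=2}^{m-1}B_a\,X_{\alpha_1+\cdots+\alpha_{a-1}},
$$
where $B_a$ is $B$ with its $a$th factor replaced by $X_{-\alpha_a-\cdots-\alpha_{m-1},N_{a,m}-1}$.

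It then remains to commute each trailing raising operator through $Q$ (free) and through $X_{-\alpha_1,k}$. For the straight-through term and for every $a>2$ one has $[e_{1,a},e_{2,1}]=-e_{2,a}$, with $e_{2,a}$ again an upper-triangular, hence positive-root, vector commuting with both $e_{1,a}$ and $e_{2,1}$; the Heisenberg divided-power identity then gives $e_{1,a}X_{-\alpha_1,k}=X_{-\alpha_1,k}e_{1,a}-X_{-\alpha_1,k-1}e_{2,a}$, both summands ending in a positive-root divided power and so lying in $I^+$. Only the $a=2$ term survives, and there $X_{\alpha_1+\cdots+\alpha_{a-1}}=X_{\alpha_1}$, so the standard $\sl_2$ relation
$$
X_{\alpha_1}X_{-\alpha_1,k}=X_{-\alpha_1,k}X_{\alpha_1}+X_{-\alpha_1,k-1}(H_{\alpha_1}+1-k)
$$
contributes, modulo $I^+$, the factor $X_{-\alpha_1,k-1}(H_{\alpha_1}+1-k)$. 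Finally I would identify $PB_2Q$ with $F_{\rho_m(t)}$: deleting one entry $m$ from the second row lowers exactly $N_{2,m}$ by one and leaves every other count, and the ordering of the factors, unchanged. This yields the stated congruence. When no $m$ occurs in the second row, $\rho_m(t)$ is undefined and $N_{2,m}=0$, so the factor $X_{-\alpha_2-\cdots-\alpha_{m-1},-1}=0$ kills $B_2$; the $a=2$ term disappears and the whole expression is congruent to $0$, as required.

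The main obstacle I anticipate is purely the bookkeeping: verifying that each bracket $[e_{1,m},\cdot]$ and $[e_{1,a},\cdot]$ vanishes for precisely the factors it must pass through (so that the divided powers really commute), and that every stray operator reaching $X_{-\alpha_1,k}$ is a positive-root vector and therefore lands in $I^+$. Once this commutation pattern is checked and the single $a=2$ contribution isolated, matching $PB_2Q$ with $F_{\rho_m(t)}$ and invoking the $\sl_2$ relation is immediate.
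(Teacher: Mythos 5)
Your proof is correct and follows essentially the same route as the paper's: the paper writes $F_t=F_3\cdots F_n$ with $F_m$ playing the role of your block $B$, expands $X_{\alpha_1+\cdots+\alpha_{m-1}}F_m$ via~(\ref{equation:3}), discards the straight-through term and all $s>2$ contributions into $I^+$ exactly as you do, and identifies the surviving $s=2$ term with $F_{\rho_m(t)}X_{-\alpha_1,k-1}(H_{\alpha_1}+1-k)$, with the undefined case handled by the same convention $X_{\alpha,N}=0$ for $N<0$. Your matrix-unit verifications of the commutation pattern are the bookkeeping the paper leaves implicit; nothing essential differs.
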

\begin{proof} Let $N_{a,b}$ denote the number of entries $b$ in row $a$ of $t$.
Consider the representation $F_t=F_3\cdots F_n$, where
$$
F_j=X_{-\alpha_2-\cdots-\alpha_{j-1},N_{2,j}}\cdots X_{-\alpha_{j-2}-\alpha_{j-1},N_{j-2,j}}X_{-\alpha_{j-1},N_{j-1,j}}.
$$
Clearly, $X_{\alpha_1+\cdots+\alpha_{m-1}}$ commutes with any $F_j$ such that $j\ne m$. Using (\ref{equation:3})
and the fact that $X_{\alpha_1+\cdots+\alpha_{s-1}}$ commutes with any factor of $F_m$ for $s=2,\ldots,m-1$, we obtain
$$
{\arraycolsep=0pt
\begin{array}{l}
X_{\alpha_1+\cdots+\alpha_{m-1}}F_m=F_mX_{\alpha_1+\cdots+\alpha_{m-1}}+\\[12pt]
\displaystyle\sum\nolimits_{s=2}^{m-1}\biggl(\prod\nolimits_{l=2}^{m-1}X_{-\alpha_l-\cdots-\alpha_{m-1},N_{l,m}-\delta_{l,s}}\!\biggr)X_{\alpha_1+\cdots+\alpha_{s-1}}.
\end{array}}
$$
Here and in what follows $\delta_{l,s}$ equals $1$ if $l=s$ and equals $0$ otherwise.
Since $X_{\alpha_1+\cdots+\alpha_{s-1}}$ commutes with any $F_j$ for $s=2,\ldots,m$ and $j=$\linebreak $m+1,\ldots,n$,
we obtain
$$
{\arraycolsep=0pt
\begin{array}{l}
X_{\alpha_1+\cdots+\alpha_{m-1}}F_tX_{-\alpha_1,k}=F_tX_{\alpha_1+\cdots+\alpha_{m-1}}X_{-\alpha_1,k}+\\[12pt]
\displaystyle\sum_{s=2}^{m-1}F_1{\cdots}F_{m-1}\Biggl(\prod_{l=2}^{m-1}\!X_{-\alpha_l-\cdots-\alpha_{m-1},N_{l,m}-\delta_{l,s}}\!\Biggr)F_{m+1}{\cdots}F_nX_{\alpha_1+\cdots+\alpha_{s-1}}X_{-\alpha_1,k}.
\end{array}}
$$
Since $m\ge3$ the first summand and any product under the summation sign for $s>2$ in the right-hand side
of the above formula belongs to $I^+$. Hence
$$
{\arraycolsep=0pt
\begin{array}{l}
X_{\alpha_1+\cdots+\alpha_{m-1}}F_tX_{-\alpha_1,k}\=
\displaystyle F_1{\cdots}F_{m-1}\biggl(\prod\nolimits_{l=2}^{m-1}\!X_{-\alpha_l-\cdots-\alpha_{m-1},N_{l,m}-\delta_{l,2}}\!\biggr)\times\\[12pt]
\times F_{m+1}{\cdots}F_nX_{-\alpha_1,k-1}(H_{\alpha_1}{-}k+1)\pmod{I^+}.
\end{array}}
$$
If $N_{2,m}>0$ then the right-hand side of the above formula equals \linebreak $F_{\rho_m(t)}X_{-\alpha_1,k-1}(H_{\alpha_1}+1-k)$.
Otherwise it equals zero and $\rho_m(t)$ is not well-defined.
\end{proof}

We also need the iterated version of $\rho_m$. Suppose that $M=(m_1,\ldots,m_l)$ is a sequence with entries in
$\{3,\ldots,n\}$ and $\lambda_2-\lambda_3=d_2\ge l$. For any regular row standard $\lambda$-tableau $t$,
we define $\rho_M(t)$ to be the $(\lambda_2-l,\lambda_3,\ldots,\lambda_n)$-tableau obtained from $t$ by removing the entries
$m_1,\ldots,m_l$ (taking into account their multiplicities) from the second row, if such removal is possible,
and shifting all elements of the resulting row to the left.
We clearly have $\rho_M(t)=\rho_{m_1}\circ\cdots\circ\rho_{m_l}(t)$ if the second row of $t$ contains entries $m_1,\ldots,m_l$.
Hence applying Lemma~\ref{lemma:2}, we obtain the following result.

\begin{corollary}\label{corollary:1}
Let $M{=}(m_1,{\ldots},m_l)$ be a sequence with entries in
$\{3,{\ldots},n\}$, $\lambda_2-\lambda_3=d_2\ge l$, $t$ be a regular row standard $\lambda$-tableau and $l\le k$.
We have
$$
\(\prod_{i=1}^lX_{\alpha_1+\cdots+\alpha_{m_i-1}}\!\)\!F_tX_{-\alpha_1,k}\=F_{\rho_M(t)}X_{-\alpha_1,k-l}\!\(\prod_{i=1}^lH_{\alpha_1}+i-k\)\!\!\!\!\!\!\!\pmod{I^+}
$$
if $\rho_M(t)$ is well-defined and
$$
\(\prod_{i=1}^lX_{\alpha_1+\cdots+\alpha_{m_i-1}}\!\)\!F_tX_{-\alpha_1,k}\=0\pmod{I^+}
$$
otherwise.
\end{corollary}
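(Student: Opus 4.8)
The plan is to argue by induction on $l$, taking Lemma~\ref{lemma:2} as the base case $l=1$ and using it again to drive the inductive step. Two easy remarks keep the manipulation under control. First, the raising operators $X_{\alpha_1+\cdots+\alpha_{m_i-1}}$ correspond to the matrix units $e_{1,m_i}$ with $m_i\ge3$, and since $e_{1,m_i}e_{1,m_j}=0$ they commute pairwise; likewise the factors $H_{\alpha_1}+i-k$ commute, being polynomials in $H_{\alpha_1}$. Second, congruence modulo the \emph{left} ideal $I^+$ is preserved under multiplication on the left by any element of $\U$.

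For the inductive step I would split off the leftmost operator, writing the product as $X_{\alpha_1+\cdots+\alpha_{m_1-1}}\bigl(\prod_{i=2}^l X_{\alpha_1+\cdots+\alpha_{m_i-1}}\bigr)$, apply the inductive hypothesis to $\bigl(\prod_{i=2}^l X_{\alpha_1+\cdots+\alpha_{m_i-1}}\bigr)F_tX_{-\alpha_1,k}$ (the sequence $(m_2,\ldots,m_l)$ has length $l-1$ and the same $k$), and then multiply the resulting congruence on the left by $X_{\alpha_1+\cdots+\alpha_{m_1-1}}$. This reduces the claim to evaluating $X_{\alpha_1+\cdots+\alpha_{m_1-1}}F_{\rho_{(m_2,\ldots,m_l)}(t)}X_{-\alpha_1,k-l+1}$, which is exactly the left-hand side of Lemma~\ref{lemma:2} for $m=m_1$, the regular row standard tableau $\rho_{(m_2,\ldots,m_l)}(t)$ of shape $(\lambda_2-l+1,\lambda_3,\ldots,\lambda_n)$, and the power $k-l+1$. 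I would check in passing that the hypotheses propagate: $d_2\ge l$ gives both $d_2\ge l-1$ (needed for the inductive hypothesis) and $d_2-(l-1)\ge1$ (needed for the final use of Lemma~\ref{lemma:2}), while $l\le k$ gives $l-1\le k$ and $k-l+1\ge1$. Using $\rho_{m_1}\circ\rho_{(m_2,\ldots,m_l)}=\rho_M$ and $1-(k-l+1)=l-k$, Lemma~\ref{lemma:2} produces the tableau $\rho_M(t)$, the power $X_{-\alpha_1,k-l}$, and the single extra factor $H_{\alpha_1}+l-k$.

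The one genuinely delicate point, and the step I expect to be the main obstacle, is that the inductive hypothesis leaves the product $\prod_{i=1}^{l-1}(H_{\alpha_1}+i-k)$ standing to the \emph{right} of everything, so combining it with the output of Lemma~\ref{lemma:2} requires multiplying a congruence modulo $I^+$ on the right by a polynomial in $H_{\alpha_1}$. Since $I^+$ is only a left ideal this is not automatic, and the crux is to verify that $I^+$ is nonetheless stable under right multiplication by $H_{\alpha_1}$. This follows from the hyperalgebra commutation rule $X_{\alpha,N}H_{\alpha_1}=(H_{\alpha_1}-N\<\alpha,\alpha_1\>)X_{\alpha,N}$, valid for $\alpha\in\Phi^+$ and $N>0$: for any $u\in\U$ one gets $uX_{\alpha,N}H_{\alpha_1}=u(H_{\alpha_1}-N\<\alpha,\alpha_1\>)X_{\alpha,N}\in\U X_{\alpha,N}\subseteq I^+$, whence $I^+H_{\alpha_1}\subseteq I^+$ and therefore $I^+f(H_{\alpha_1})\subseteq I^+$ for every polynomial $f$. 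With this in hand, right multiplication by $\prod_{i=1}^{l-1}(H_{\alpha_1}+i-k)$ preserves the congruence, and the commuting $H_{\alpha_1}$-factors recombine as $(H_{\alpha_1}+l-k)\prod_{i=1}^{l-1}(H_{\alpha_1}+i-k)=\prod_{i=1}^{l}(H_{\alpha_1}+i-k)$, giving precisely the asserted identity.

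The degenerate case is absorbed by the same induction. If $\rho_M(t)$ is undefined then either $\rho_{(m_2,\ldots,m_l)}(t)$ is already undefined, so the inductive hypothesis returns $0$ and left multiplication keeps the expression inside $I^+$; or it is defined but $\rho_{m_1}$ cannot be applied to it, in which case the final appeal to Lemma~\ref{lemma:2} returns $0$. Either way the whole product is $\equiv0\pmod{I^+}$, as required.
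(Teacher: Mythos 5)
Your proof is correct and follows essentially the same route as the paper, which simply iterates Lemma~\ref{lemma:2} using $\rho_M=\rho_{m_1}\circ\cdots\circ\rho_{m_l}$. The only point you elaborate beyond what the paper records is the verification that the left ideal $I^+$ is stable under right multiplication by polynomials in $H_{\alpha_1}$ (via $X_{\alpha,N}H_{\alpha_1}=(H_{\alpha_1}-N\<\alpha,\alpha_1\>)X_{\alpha,N}$), a step the paper leaves implicit but which you handle correctly.
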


In what follows, $\coeff_{\alpha_1}(\beta)$ denotes
the coefficient at $\alpha_1$ of a root $\beta\in\Phi$.

\begin{proof}[Proof of Theorem~B]
{\it ``Only if part''.} Suppose that the $G^{(1)}$-submodule of $L(\omega)$
generated by $X_{-\alpha_1,k}v^+_\omega$ is isomorphic to a Weyl module.
Then \linebreak$X_{-\alpha_1,k}v^+_\omega\ne0$ and
$X_{\alpha_1,k}X_{-\alpha_1,k}v^+_\omega=\binom{a_1}kv^+_\omega\ne0$.
Hence $a_1-l\not\equiv0\pmod p$ for $l=0,\ldots,k-1$, since $k<p$.

Now let $v$ be a nonzero $G^{(1)}$-primitive vector of
$KG^{(1)}X_{-\alpha_1,k}v^+_\omega$. By Lemma~\ref{lemma:1},
$v$ is a $T$-weight vector. It has $T$-weight $\omega-\delta$,
where $\delta$ is a sum of positive roots.
Clearly, the coefficient at $\alpha_1$ of $\delta$ equals $k$.
We claim that

\begin{equation}\label{equation:3.5}
\arraycolsep=1pt
\begin{array}{rcl}
\delta\in E(1,k)&:=&\{\beta_1+\cdots+\beta_l\|\beta_1,\ldots,\beta_l\in\Phi^+,\coeff_{\alpha_1}(\beta_1){>}0,\ldots,\\[6pt]
      &  &\,\,\,\coeff_{\alpha_1}(\beta_l){>}0,\coeff_{\alpha_1}(\beta_1)+\cdots+\coeff_{\alpha_1}(\beta_l)=k\}.
\end{array}
\end{equation}

Indeed, By Proposition~\ref{proposition:1}, the products
$\prod\nolimits_{\alpha\in\Phi^+}X_{\alpha,m_\alpha}$
taken in any fixed order form a basis of $\U^+$.
Let us assume now that this order is such that
any factor $X_{\alpha,m_\alpha}$ with $\coeff_{\alpha_1}(\alpha)>0$
is situated to the left of any factor $X_{\beta,m_\beta}$ with
$\coeff_{\alpha_1}(\beta)=0$.
Since $v\ne0$, we have $\(\prod\nolimits_{\alpha\in\Phi^+}X_{\alpha,m_\alpha}\)v=cv^+_\omega$
for some $c\in\F^*$ and $m_\alpha\in\Z^+$ such that $\sum_{\alpha\in\Phi^+}m_\alpha\alpha=\delta$.
Since $v$ is $G^{(1)}$-primitive, the order of factors we have chosen implies that $m_\alpha=0$ if
$\coeff_{\alpha_1}(\alpha)=0$. On the other hand,
$\sum_{\alpha\in\Phi^+}m_\alpha\coeff_{\alpha_1}(\alpha)=k$.
Hence~(\ref{equation:3.5}) directly follows.

Now it remains to notice that
$$
E(1,k)  =\{b_1\alpha_1+\cdots+b_{n-1}\alpha_{n-1}\|k=b_1\ge b_2\ge\cdots\ge b_{n-1}\ge0\}.
$$

{\it ``If part''.}
We assume that
$a_1-l\not\equiv0\pmod p$ for any $l=0,\ldots,k-1$ and
any nonzero primitive vector of $\Delta(\bar\omega+k\bar\omega_2)$ has weight as in~\ref{condition:case:q=1}.
In particular, we have $k\le a_1$.
Suppose that $KG^{(1)}X_{-\alpha_1,k}v^+_\omega$ is not isomorphic to a Weyl module. Then by~(\ref{equation:0}),
we get $\ker\phi_k^\omega\ne0$. Since $\ker\phi_k^\omega$ is a submodule of
$\Delta(\bar\omega+k\bar\omega_2)$, it contains a nonzero primitive vector $u$.
Our assumption implies that $u$ has weight
$\bar\omega+k\bar\omega_2-b_2\bar\alpha_2-\cdots-b_{n-1}\bar\alpha_{n-1}$, where $k\ge b_2\ge\cdots\ge b_{n-1}\ge0$.

The universal property of Weyl modules implies the existence of
the $G^{(1)}$-module homomorphism
$\gamma:\Delta(\bar\omega+k\bar\omega_2)\to KG^{(1)}X_{-\alpha_1,k}e^+_\omega$ such that
$\gamma(e^+_{\bar\omega+k\bar\omega_2})=X_{-\alpha_1,k}e^+_\omega$.
Lemma~\ref{lemma:0} shows that
$\gamma$ is an isomorphism. Since $\pi^\omega\circ\gamma=\phi_k^\omega$
(to prove it, apply both sides to $e^+_{\bar\omega+k\bar\omega_2}$),
we have $\gamma(u)\in\rad\Delta(\omega)$.

Take any sequence $\lambda=(\lambda_2,\ldots,\lambda_n)$ of
nonnegative integers coherent with $\bar\omega+k\bar\omega_2$. In particular,
we have $\lambda_2-\lambda_3=\<\bar\omega+k\bar\omega_2,\bar\alpha_2\>=a_2+k\ge b_2$.
By Proposition~\ref{basis of Weyl module}, we have the
representation $u{=}\sum_{s\in S}c_sF_se^+_{\bar\omega+k\bar\omega_2}$, where $c_s\in\F^*$ and
$S$ is a nonempty set consisting of standard $\lambda$-tableaux
$s$ such that $F_s$ has weight
$-b_2\alpha_2-\cdots-b_{n-1}\alpha_{n-1}$. Obviously, any tableau
$s\in S$ has exactly $b_2$ entries greater than $2$ in the second
row (see Remark~\ref{remark:2}).

Let us fix some tableau $t\in S$, denote be $m_1,\ldots,m_{b_2}$ all the entries
greater than $2$ in the second row of $t$ (taking into account multiplicities) and put $M:=(m_1,\ldots,m_{b_2})$.
Clearly, $\rho_M(t)$ is well-defined. Moreover, for any $s\in S$ such that $\rho_M(s)$ is well-defined,
$\rho_M(s)$ is a standard $(\lambda_2-b_2,\lambda_3,\ldots,\lambda_n)$-tableau whose every entry in the second row is $2$ and
$F_{\rho_M(s)}$ has weight
$$
-b_2\alpha_2-\cdots-b_{n-1}\alpha_{n-1}+\(\!\sum\nolimits_{i=1}^{b_2}\alpha_2+\cdots+\alpha_{m_i-1}\!\)=-b'_3\alpha_3-\cdots-b'_{n-1}\alpha_{n-1},
$$
where $b'_3,\ldots,b'_{n-1}$ are nonnegative integers (independent of $s$).
Applying $\gamma$ to the above representation of $u$, we obtain
$$
\gamma(u)=\sum\nolimits_{s\in S}c_sF_sX_{-\alpha_1,k}e^+_\omega\in\rad\Delta(\omega).
$$
Multiplying this formula by $\(\prod_{i=1}^{b_2}X_{\alpha_1+\cdots+\alpha_{m_i-1}}\)$ on the left,
taking into account $b_2\le k$ and applying Corollary~\ref{corollary:1}, we obtain
\begin{equation}\label{equation:4}
{\arraycolsep=-2pt
\begin{array}{l}
\displaystyle\(\prod_{i=1}^{b_2}a_1{+}i{-}k\!\)\!
\sum\Bigl\{F_{\rho_M(s)}X_{-\alpha_1,k-b_2}e^+_\omega\Bigl|s{\in}S\mbox{ and }\rho_M(s)\mbox{ is well-defined}\Bigr\}\\[16pt]
\in\rad\Delta(\omega).
\end{array}}
\end{equation}
Since $b_2\le k$ and we assumed $a_1-l\not\equiv0\pmod p$ for any $l=0,\ldots,k-1$,
the fist factor of the product in the left-hand side of the above formula is nonzero. Moreover, if $s$ and $s'$ are distinct tableaux of $S$ and
both $\rho_M(s)$ and $\rho_M(s')$ are well-defined, then $\rho_M(s)\ne\rho_M(s')$. Notice that the summation in~(\ref{equation:4}) is nonempty,
since at least $s=t$ satisfies the restrictions.

By Lemma~\ref{lemma:0}, the $G^{(1)}$-submodule $W$ of $\Delta(\omega)$ generated by $X_{-\alpha_1,k-b_2}e^+_\omega$
is isomorphic to $\Delta(\bar\omega+(k-b_2)\bar\omega_2)$. Note that $(\lambda_2-b_2,\lambda_3,\ldots,\lambda_n)$ is coherent
with $\bar\omega+(k-b_2)\bar\omega_2$. Therefore by Proposition~\ref{basis of Weyl module},
the left-hand side of~(\ref{equation:4}) is nonzero.
It belongs to a proper
$G^{(1)}$-submodule $W\cap\rad\Delta(\omega)$ of $W$ and hence to $\rad W$.
Note that $X_{\alpha_1,k-b_2}X_{-\alpha_1,k-b_2}e^+_\omega=\binom{a_1}{k-b_2}e^+_\omega\ne0$,
whence $X_{-\alpha_1,k-b_2}e^+_\omega\notin\rad\Delta(\omega)$ and indeed $W\cap\rad\Delta(\omega)\ne W$.

In other words, we proved that $\rad\Delta(\bar\omega+(k-b_2)\bar\omega_2)$ contains a nonzero vector $u'$
of weight $\bar\omega+(k-b_2)\bar\omega_2-b'_3\bar\alpha_3-\cdots-b'_{n-1}\bar\alpha_{n-1}$.
As an immediate consequence of this fact, we get $n\ge4$.
For any weight $\kappa\in X(T)$, we denote by $\widetilde\kappa$ its restriction~to~$T^{(1,2)}$.
By Lemma~\ref{lemma:0}, the $G^{(1,2)}$-submodule $W'$ of $\Delta(\bar\omega{+}(k{-}b_2)\bar\omega_2)$
generated by $e^+_{\bar\omega{+}(k{-}b_2)\bar\omega_2}$ is isomorphic to $\Delta(\widetilde\omega)$
(the restriction of ${\bar\omega+(k-b_2)\bar\omega_2}$ to $T^{(1,2)}$ is $\widetilde\omega$).
Clearly, $u'$ belongs to a proper submodule $W'\cap\rad\Delta(\bar\omega+(k-b_2)\bar\omega_2)$ of $W'$
and thus belongs to $\rad W'$. In  this way, we proved that $\Delta(\widetilde\omega)$ is not simple.

Consider the $G^{(1,2)}$-submodule $W''$ of $\Delta(\bar\omega+k\bar\omega_2)$ generated by
$e^+_{\bar\omega+k\bar\omega_2}$. By Lemma~\ref{lemma:0}, $W''$ is isomorphic to $\Delta(\widetilde\omega)$
(the restriction of $\bar\omega+k\bar\omega_2$ to $T^{(1,2)}$ is also $\widetilde\omega$).
Therefore $W''$ is not simple and contains a nonzero $G^{(1,2)}$-primitive vector $u''$
of $T^{(1,2)}$-weight $\widetilde\omega-d_3\widetilde\alpha_3-\cdots-d_{n-1}\widetilde\alpha_{n-1}$,
where $d_3,\ldots,d_{n-1}$ are nonnegative integers not equal simultaneously to zero.
By Lemma~\ref{lemma:1}, we obtain that $u''$ has $T^{(1)}$-weight
$\bar\omega+k\bar\omega_2-d_3\bar\alpha_3-\cdots-d_{n-1}\bar\alpha_{n-1}$.
Note that this weight does not have the form described in~\ref{condition:case:q=1}.
Since $x_{\alpha_2}(t)$ commutes with any $x_{-\alpha_i}(s)$, where $i=3,\ldots,n{-}1$, and
$$
u''\in W''=\F\,\<x_{-\alpha_i}(s)\|i=3,\ldots,n-1,s\in\F\>\,e_{\bar\omega+k\bar\omega_2},
$$
we obtain that $u''$ is $G^{(1)}$-primitive. This is a contradiction.
\end{proof}

\begin{proof}[Proof of Theorem~A] Suppose that the hypothesis of~\ref{condition:G1} holds.
The weights of $\Delta(\bar\omega_2)$ are $\kappa_1,\ldots,\kappa_{n-1}$,
where $\kappa_i=\bar\omega_2-\bar\alpha_2-\cdots-\bar\alpha_i$ and each weight space is one-dimensional.

Suppose for a while that $\ch\F=0$. It is well known that for any $\kappa\in X^+(T^{(1)})$,
the module $\Delta(\kappa)\otimes\Delta(\bar\omega_2)$
is a direct sum of
$\Delta(\kappa+\kappa_i)$ over $i=1,\ldots,n-1$ such that $\kappa+\kappa_i\in X^+(T^{(1)})$
(see, for example,~\cite[Lemma~4.8]{Kleshchev_tf3}).
Thus the module $\Delta(\bar\omega+m\bar\omega_2)\otimes\Delta(\bar\omega_2)^{\otimes k-m}$
is a direct sum of several copies of $\Delta(\bar\omega+m\bar\omega_2+\kappa_{i_1}+\cdots+\kappa_{i_{k-m}})$ over sequences
$i_1,\ldots,i_{k-m}$ of integers in $\{1,\ldots,n-1\}$ such that
$\bar\omega+m\bar\omega_2+\kappa_{i_1}+\cdots+\kappa_{i_{k-m}}\in X^+(T^{(1)})$. Moreover, the module
$\Delta(\bar\omega+k\bar\omega_2)$ enters into this sum with multiplicity one.

Let us return to the case $\ch\F=p>0$. Applying the main result of~\cite{Mathieu2}, we obtain that
the module $V:=\Delta(\bar\omega+m\bar\omega_2)\otimes\Delta(\bar\omega_2)^{\otimes k-m}$ has a filtration
with factors $\Delta(\bar\omega+m\bar\omega_2+\kappa_{i_1}+\cdots+\kappa_{i_{k-m}})$ over the same sequences
$i_1,\ldots,i_{k-m}$ with the same multiplicities.
By~\cite[II.4.16~Remark 4]{Jantzen2} applied to the dual module $V^*$,
$V$ has a submodule isomorphic to $\Delta(\bar\omega+k\bar\omega_2)$.

Now recall that $\Delta(\bar\omega{+}m\bar\omega_2)\cong\nabla(\bar\omega{+}m\bar\omega_2)$
by the hypothesis of the present lemma and $\Delta(\bar\omega_2)\cong\nabla(\bar\omega_2)$.
Therefore, $V$ is isomorphic to $\nabla(\bar\omega{+}m\bar\omega_2)\otimes\nabla(\bar\omega_2)^{\otimes k-m}$ and
by the main result of~\cite{Mathieu2}
has a filtration with factors \linebreak$\nabla(\bar\omega+m\bar\omega_2+\kappa_{i_1}+\cdots+\kappa_{i_{k-m}})$ over the same sequences
$i_1,\ldots,i_{k-m}$ with the same multiplicities. Applying~\cite[Proposition II.4.13]{Jantzen2}, we obtain that $\Hom_{G^{(1)}}(\Delta(\kappa),V)=0$
unless $\kappa=\bar\omega+m\bar\omega_2+\kappa_{i_1}+\cdots+\kappa_{i_{k-m}}$. Since $V$ has a submodule isomorphic
to $\Delta(\bar\omega+k\bar\omega_2)$, any nonzero primitive vector of $\Delta(\bar\omega+k\bar\omega_2)$ has weight
$\bar\omega+m\bar\omega_2+\kappa_{i_1}+\cdots+\kappa_{i_{k-m}}$ with $i_1,\ldots,i_{k-m}$ as above.
It remains to apply Theorem~B\ref{condition:case:q=1}.

Part~\ref{condition:Gn-1} can be proved similarly but tensoring with $\Delta(\bbar\omega_{n-2})$ and
applying Theorem~B\ref{condition:case:q=n-1}.
\end{proof}

\end{document}